
\documentclass{article}

\usepackage{amsmath}
\usepackage{amssymb}
\usepackage{amsthm}
\usepackage{latexsym}
\usepackage{bbold}
\usepackage{stmaryrd}

\newtheorem{theorem}{Theorem}

\newtheorem{lemma}[theorem]{Lemma}
\newtheorem{definition}[theorem]{Definition}

\theoremstyle{definition}
\newtheorem{example}[theorem]{Example}

\newenvironment{romanenumerate}{\begin{enumerate}}
        {\end{enumerate}} 

\newcommand{\sst}{\; | \;}  
\newcommand{\integers}{\ensuremath{\mathbb{Z}}} 
\newcommand{\naturals}{\ensuremath{\mathbb{N}}} 
\newcommand{\cs}{,\;} 
\newcommand{\qs}{\:} 
\newcommand{\A}{\mathbb{A}} 
\newcommand{\B}{\mathbb{B}} 
\newcommand{\F}{\ensuremath{\mathcal{F}}} 
\newcommand{\LyndonGrp}{\ensuremath{F^{\integers [t]}}} 
\newcommand{\Aut}{\mathrm{Aut}} 
\newcommand{\sgn}{\mathrm{sgn}} 
\renewcommand{\t}[2]{t_{{#1},{#2}}} 
\newcommand{\braced}[4]{\left\{\begin{array}{ll} {#1} & {#2} \\ {#3} & {#4} \end{array}\right.}

\title{Compressed words and automorphisms in fully residually free groups}
\author{Jeremy Macdonald\\Department of
  Mathematics and Statistics, McGill University\\
  805 Sherbrooke Street West \\
Montr\'eal, Qu\'ebec,
Canada,
H3A 2K6 \\
  email:
  \texttt{jmacdonald@math.mcgill.ca}}
\sloppy

\begin{document}
\maketitle{}

\begin{abstract}
We show that the compressed word problem in a finitely generated fully residually free group ($\F$-group)
is decidable in polynomial time, and use
this result to show that the word problem in the automorphism group of an $\F$-group is decidable in
polynomial time.
\end{abstract}
\tableofcontents
\section{Preliminaries}
The \emph{word problem} for a finitely presented group $G=\langle X\,|\, R\rangle$ asks, given a word
$w$ over the alphabet $X^{\pm}=X\cup X^{-1}$, whether $w$ represents the identity element of $G$.  Being proposed for study by
Dehn in 1911, decidabliliy of the word problem for particular groups and classes of groups
was the main focus of study,
without regard to the efficiency of the proposed algorithms.  Once computational complexity became
of interest, time complexity of word problems was considered and has now been studied in many classes of groups.
One such class was the
automorphism group of a finite rank free group.  The problem reduces, with an exponential increase in size, to the
word problem in the underlying free group.  Schleimer has shown (\cite{Schleimer08}) that one can encode
the exponential expansion using Plandowski's techinque of \emph{compressed words} and,
using an algorithm for comparing compressed words (\cite{Plandowski94}), obtain a polynomial time algorithm.  We use a
similar strategy to obtain a polynomial time algorithm for the word problem in the automorphism group
of a finitely generated fully residually free group.

\subsection{The compressed word problem}
A \emph{straight-line program} (SLP) is a tuple
$\A=(X,\mathcal{A},A_n,\mathcal{P})$ consisting of a finite alphabet $\mathcal{A}=\{A_n,\ldots,A_1\}$
of \emph{non-terminal symbols}, a finite alphabet $X$ of \emph{terminal symbols},
a \emph{root} non-terminal $A_n\in\mathcal{A}$, and a set of \emph{productions}
$\mathcal{P}=\{A_i\rightarrow W_i\sst 1\leq i \leq n\}$ where $W_i\in\{A_j A_k \sst  j,k < i\}\cup X\cup\{\phi\}$,
where $\phi$ represents the empty word.  Computer scientists will recognize SLPs as a type of context-free grammar.
We `run' the program $\A$
by starting with the word $A_n$ and replacing each non-terminal $A_i$ by $W_i$ and continuing this replacement
procedure
until only terminal symbols remain.  The condition $j,k<i$ ensures that the program terminates.
The resulting word is denoted $w_{\A}$, and
we also denote by $w_{A_i}$ the result of running the same program starting with $A_i$ instead of the root $A_n$.
The SLP $\A$ (and, abusing language, $w_{\A}$) is also called a \emph{compressed word} over $X$.
The reader may consult \cite{Schleimer08} for a more detailed introduction to compressed words.

The \emph{production tree} associated with a non-terminal $A_m$
is the rooted binary tree with root labelled $A_m$ and where vertex
$A_i$ has children as follows: if $A_i\rightarrow A_j A_k$ then $A_i$ has left child $A_j$ and right
child $A_k$, if $A_i\rightarrow x$ (where $x\in X$) then $A_i$ has a single child labelled $x$, and
if $A_i\rightarrow \phi$ then $A_i$ has a single child labelled by the empty word $\phi$.
Notice that $w_{A_m}$ is the word appearing
at the leaves of the production tree.  We say that $A_m$ \emph{produces} $w_{A_m}$.

Let the size $|\A|$ of an SLP be the number $n$ of non-terminal symbols.  Note that the number of
bits required to write down $\A$ is $O(n\log_2 n)$ (the factor of $\log_2 n$ appears in writing down the
non-terminal symbols $A_i$).  An SLP with $n$ non-terminal symbols can encode a word $w_A$ of
length $2^n$.  Any algorithm that takes as input a  word over the alphabet $X$ can, of course, be used on compressed
words over $X$ by simply running the algorithm on $w_{\A}$, but this converts a time $f(n)$ algorithm to
one that runs in time $O(f(2^{|\A|}))$.  The goal then is to develop algorithms that work directly with the
SLP without expanding it.

In this paper we consider the \emph{compressed word problem} for finitely generated
fully residually free groups.
For an alphabet $X$, let $X^{-1}$ be the set of symbols $\{x^{-1}\sst x\in X\}$ and
set $X^{\pm}=X\cup X^{-1}$.
If $G$ is a group presented by $G=\langle X\sst R\rangle$ the compressed word problem
asks to decide, given a compressed word $\A$ over $X^{\pm}$, whether $w_{\A}$ represents the identity
element of $G$.

We will use the following result of Lohrey \cite{Lohrey04} that solves the compressed word problem
for free groups in polynomial time:
\begin{lemma}[Lohrey] \label{Lem:CWPinF}
There is a polynomial time algorithm which, given a straight-line program $\A$ over the
alphabet $X^{\pm}$, decides whether $w_{\A}=1$ in the free group on $X$.
\end{lemma}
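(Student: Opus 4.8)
The plan is to reduce the problem to computing the freely reduced normal form of $w_{\A}$ in compressed form: since a word over $X^{\pm}$ represents the identity of the free group on $X$ if and only if its freely reduced form is the empty word, it suffices to build, for each non-terminal $A_i$, a straight-line program producing the reduced word $\mathrm{red}(w_{A_i})$ and then test whether the program associated with the root $A_n$ produces a word of length $0$.

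I would process the productions bottom-up. If $A_i \to x$ with $x \in X^{\pm}$ the reduced form is just $x$, and if $A_i \to \phi$ it is empty. The only real work is a production $A_i \to A_j A_k$: here $\mathrm{red}(w_{A_i})$ is obtained from $\mathrm{red}(w_{A_j})$ and $\mathrm{red}(w_{A_k})$ by cancelling the maximal common boundary. Writing $u = \mathrm{red}(w_{A_j})$ and $v = \mathrm{red}(w_{A_k})$, let $p$ be the largest integer such that the length-$p$ suffix of $u$ is the formal inverse of the length-$p$ prefix of $v$. Then $\mathrm{red}(w_{A_i})$ is the concatenation of the length-$(|u|-p)$ prefix of $u$ with the length-$(|v|-p)$ suffix of $v$; maximality of $p$ guarantees that no further cancellation occurs at the new seam, so the result is reduced. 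Thus each step requires two compressed-string primitives: extracting an SLP for a prefix or suffix of a compressed word (a standard operation that adds only polynomially many non-terminals), and computing the cancellation length $p$.

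The computation of $p$ is the crux. It amounts to finding the longest common extension between the reversal of $u$ and the formal inverse of $v$, which I would obtain by binary search on $p$: for a trial length $\ell$, build SLPs for the length-$\ell$ suffix of $u$ and the length-$\ell$ prefix of $v$, invert one of them, and test equality of the two compressed words. Equality of SLP-compressed words is decidable in polynomial time by Plandowski's compressed pattern-matching and equality algorithm, so each trial is polynomial and $O(\log|u|) = O(n)$ trials suffice. This is the key technical input that makes the whole procedure feasible.

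The main obstacle is complexity control rather than correctness: naively iterating the prefix and suffix extractions across all $n$ productions could compound the size of the intermediate SLPs. The argument must show that each inductive step enlarges the current program by only a polynomial (indeed $O(n)$) number of non-terminals, so that after $n$ steps the total size, and hence the cost of every compressed-equality test performed along the way, remains polynomial in $n$. Bounding this cumulative growth, together with the longest-common-extension computation, is where the essential effort lies; once these are in place, a final length-zero test on the reduced program for the root $A_n$ decides whether $w_{\A} = 1$.
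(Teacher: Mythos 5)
The paper itself gives no proof of this lemma: it is quoted from Lohrey \cite{Lohrey04}, with the remark that it rests on Plandowski's compressed-equality algorithm and a pointer to \cite{Schleimer08} for an exposition. So your attempt has to be measured against Lohrey's published argument. In outline you have reconstructed it correctly: compute, bottom-up over the productions, a compressed representation of the freely reduced word $\mathrm{red}(w_{A_i})$; at a production $A_i\rightarrow A_jA_k$ cancel the maximal matching boundary of length $p$, which is sound because maximality of $p$ (with $u,v$ already reduced) rules out cancellation at the new seam; and the predicate ``the length-$\ell$ suffix of $u$ is the formal inverse of the length-$\ell$ prefix of $v$'' is indeed monotone in $\ell$, so your binary search with Plandowski equality tests at each trial is legitimate.

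The genuine gap is exactly the point you defer: the cumulative size bound. For ordinary SLPs, extracting a prefix or suffix adds a number of fresh non-terminals proportional to the depth (in the worst case the size) of the current program for the child non-terminal, and the extracted program's depth is not bounded by that of the original. Iterating over all $n$ productions therefore gives a recurrence of the shape $s_i \leq s_{i-1} + O(s_{i-1})$, which permits exponential blow-up; ``polynomial per operation'' does not compound into ``polynomial overall,'' and your assertion that each step adds only $O(n)$ non-terminals is precisely what needs proof. Lohrey's actual argument resolves this by working not with plain SLPs but with \emph{composition systems} (SLPs enriched with cut productions of the form $A\rightarrow B[i:j]$), in which each of the $n$ reduction steps adds only $O(1)$ productions, and then invoking Hagenah's theorem that equality of composition systems is decidable in polynomial time (equivalently, that composition systems can be converted to ordinary SLPs in polynomial time). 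That theorem, together with Plandowski's algorithm, is the essential technical input; naming the obstacle without this (or an equivalent balancing argument) leaves the claimed polynomial running time unestablished.
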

Lohrey's result relies on the fundamental result of Plandowski \cite{Plandowski94}:
\begin{lemma}[Plandowski's Algorithm] There is a polynomial time algorthim which, given straight-line programs
$\A$ and $\B$ over an alphabet $X$, decides if $w_A=w_B$ (as words in the free monoid over $X$).
\end{lemma}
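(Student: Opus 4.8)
The plan is to decide equality without ever expanding $w_\A$ or $w_\B$, since these can have length up to $2^{|\A|}$ and $2^{|\B|}$. Set $N=|\A|+|\B|$. First I would merge the two programs into a single grammar containing both root nonterminals, say $S_\A$ and $S_\B$, so that the two words live in a common ambient alphabet and can be processed by identical operations. The strategy, following the \emph{recompression} paradigm, is to repeatedly apply a pair of length-reducing rewriting operations to both words \emph{simultaneously}, performing each operation by editing the productions of the grammar rather than the expanded strings. The operations are chosen so that they act deterministically on the underlying words; hence after each round $w_\A$ and $w_\B$ are transformed into new words $w_\A', w_\B'$ with $w_\A=w_\B \iff w_\A'=w_\B'$. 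When both words have been reduced to single terminal symbols, equality holds if and only if those symbols coincide.

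The two operations are \emph{block compression} and \emph{pair compression}. In block compression, every maximal run $a^\ell$ of a single letter is replaced by a fresh letter $a_\ell$; in pair compression, we fix a partition of the current alphabet into a left set $L$ and a right set $R$ and replace every factor $ab$ with $a\in L$, $b\in R$ by a fresh letter. On an explicitly written string each full round (a block step followed by a pair step, with a suitable choice of $L,R$) removes a constant fraction of the letters, so after $O(N)$ rounds the strings collapse to single letters. The content of the proof is to realize each round by modifying the grammar while keeping its size polynomially bounded in $N$.

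The main obstacle is that an occurrence of a block or of a compressible pair may \emph{cross} the boundary between the two halves produced by a nonterminal $A_i\to A_jA_k$, so that it is not visible as a factor inside any single production. The device that resolves this is \emph{popping}: for each nonterminal one extracts (``pops'') the leading and trailing letters of its produced word up into the productions that reference it. After popping, every maximal block and every candidate pair occurs explicitly as a factor of some right-hand side, so the compression can be carried out by local edits to the productions. For pair compression one must also argue that a single well-chosen bipartition $(L,R)$ captures a constant fraction of all adjacent pairs; this follows from a counting argument (a uniformly random bipartition compresses an expected constant fraction of occurrences, and the choice can then be derandomized).

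The hardest part is the amortized size analysis. Popping temporarily enlarges the grammar, while compression shrinks it, and one must show these effects balance so that the grammar never exceeds polynomial size over the whole computation. This is done with a potential argument assigning weights to nonterminals and tracking the number of letters popped against the number removed by compression; bounding the popped letters by a constant per nonterminal per round and summing over $O(N)$ rounds yields a polynomial bound on both the grammar size and the total running time. Combining the synchronized reduction with this size bound gives the claimed polynomial-time equality test, and hence Plandowski's algorithm.
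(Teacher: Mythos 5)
Your sketch is essentially a correct outline, but it proves the lemma by a genuinely different route from the one the paper relies on. The paper imports this statement as a black box, citing Plandowski's original algorithm \cite{Plandowski94} (with the exposition in \cite{Schleimer08}); that argument does not compress anything. Instead it works with the two fixed programs and decides $w_{\A}=w_{\B}$ by maintaining a polynomial-size family of equality and overlap tests between factors of the words produced by the nonterminals, using combinatorics of periodicity (Fine--Wilf-type lemmas) to keep the number of tests polynomial. What you describe is Je\.{z}'s later \emph{recompression} proof of the same theorem: synchronously shrink both words by alternating block compression and pair compression, implemented on the grammar via popping, until both collapse to single letters. The trade-off: Plandowski's route is the historically cited one and is what Lohrey's Lemma~\ref{Lem:CWPinF} was originally built on, while recompression is conceptually simpler, yields better running times, and generalizes well beyond equality testing (to fully compressed pattern matching and word equations). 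Two points you gloss deserve flagging, though neither is fatal since both are handled in the recompression literature: first, in block compression the maximal runs $a^{\ell}$ that get popped can have length exponential in $N$, so they must be carried with binary-encoded exponents and the size analysis must bound the number of \emph{distinct} block lengths introduced per round --- the ``constant number of popped letters per nonterminal per round'' accounting you state applies to pair compression, not to blocks; second, derandomizing the bipartition $(L,R)$ requires counting pair occurrences through the grammar (weighting each production by the number of derivation-tree occurrences of its nonterminal), not on the expanded word, since the latter is inaccessible in polynomial time. With those repairs your argument is a complete and valid alternative proof of the lemma.
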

A nice description of both results and their proofs is given in \cite{Schleimer08}.

\subsection{Fully residually free groups and Lyndon's group $\LyndonGrp$}
\begin{definition}
A group $G$ is
\emph{fully residually free} if for every finite set $\{g_1,g_2,\ldots,g_n\}$ of elements of $G$ there exists a free group $F$
and a homomorphism $\varphi: G\rightarrow F$ such that $\varphi(g_i)\neq 1$ for all $i=1,2,\ldots,n$.
We refer to finitely generated fully residually free groups as $\F$-groups (they are also known as
\emph{limit groups}).
\end{definition}
Finitely generated free groups are $\F$-groups, and
the first example of a non-free $\F$-group was \emph{Lyndon's group} $\LyndonGrp$,
introduced in \cite{Lyndon60}.
$\F$-groups are now known to be
precisely the finitely-generated subgroups of $\LyndonGrp$
(\cite{IrredAffineII}).
We will use a description of $\LyndonGrp$ in terms of HNN-extensions, following
\cite{RegularFreeLengthFunctions} rather than \cite{Lyndon60}.  The construction is as follows.

For a group $G$, let $R(G) $ be a set of representatives of conjugacy classes of generators of
all proper cyclic centralizers of $G$.
That is, every centralizer in $G$ which is cyclic is conjugate to $C_G(u)=\langle u \rangle$ for some $u\in R(G)$, and
no two elements of $R(G)$ are conjugate. Then the \emph{extension of (all) cyclic centralizers of $G$} is the
HNN-extension
\begin{eqnarray}\label{Eqn:ExtOfAllCent}
\langle G,\t{u}{i} \; (u\in R(G),i\in \naturals) \sst
	\forall\qs (u\in R(G),i,j\in\naturals) \qs [\t{u}{i},u]=[\t{u}{i},\t{u}{j}]=1\;  \rangle.
\end{eqnarray}
Let $F$ be a  free group.  Then Lyndon's group $\LyndonGrp$ is (isomorphic to) the direct limit (i.e. union) of the infinite
chain of groups
\begin{eqnarray} \label{Eqn:LyndonChain}
F=H_0 < H_1 < H_2 < \ldots
\end{eqnarray}
where $H_{i+1}$ is obtained from $H_i$ by extension of all cyclic centralizers. Lyndon showed that $\LyndonGrp$ is
fully residually free \cite{Lyndon60}, hence so are all its subgroups.

In addition to this HNN construction, there are two other constructions of $\LyndonGrp$.  Lyndon's original
construction represented elements as \emph{parametric words}, and Myasnikov, Remeslennikov, and Serbin  \cite{RegularFreeLengthFunctions}
construct $\LyndonGrp$ using
\emph{infinite words}.  The latter construction has proven to be particularly fruitful in solving algorithmic problems,
yielding solutions to the conjugacy and power problems in $\LyndonGrp$.  Two of the important constructions from
\cite{RegularFreeLengthFunctions} that we will need are
normal forms for elements of $\LyndonGrp$ (in terms of infinite words) and a \emph{Lyndon length function}
on $\LyndonGrp$.

A \emph{regular free Lyndon length function} on a group $G$ is a map $l:G\rightarrow A$, where $A$ is an
ordered abelian group, satisfying
\begin{romanenumerate}
\item $\forall\, g\in G: l(g)\geq 0 \; \mathrm{and} \; l(1)=0$,
\item $\forall\, g\in G: \;\; l(g)=l(g^{-1})$,
\item $\forall\, g\in G: \;\; g\neq 1 \implies l(g^2)>l(g)$, and,
\end{romanenumerate}
setting
\[
c_p(g_1,g_2)=\frac{1}{2}\left(l(g_1)+l(g_2)-l(g_1^{-1}g_2)\right),
\]
called the \emph{length of the maximum common prefix},
\begin{romanenumerate}\setcounter{enumi}{3}
\item $\forall\, g_1,g_2\in G:\; c_p(g_1,g_2)\in\integers[t]$,
\item $\forall\, g_1,g_2,g_3\in G: \;\; c_p(g_1,g_2)>c_p(g_1,g_3) \implies c_p(g_1,g_3)=c_p(g_2,g_3)$ \label{L3},
\item $\forall\, g_1,g_2\in G\: \exists\, h, g_1',g_2'\in G$ such that $l(h)=c_p(g_1,g_2)$ and
	$g_1=h\circ g_1'$ and $g_2=h\circ g_2'$ \label{Lyndon:regular}
\end{romanenumerate}
where $\circ$ is defined by
\[
g_1=g_2\circ g_3 \iff g_1=g_2 g_3 \;\;\mbox{and}\;\; l(g_1)=l(g_2)+l(g_3).
\]
For elements $g,h\in G$  we say that $h$ is a \emph{prefix} of $g$ if there exists $g' \in G$ such
that $g=h\circ g'$.

Consider $\integers[t]$ as an ordered abelian group via the
right lexicographic order
induced by the direct sum decomposition
$\integers[t] = \oplus_{m=0}^{\infty} \langle t^m \rangle\simeq \integers^{\infty}$. We use the natural isomorphism
$\integers[t]\simeq \integers^{\infty}$ throughout.
Using the infinite words technique,  \cite{RegularFreeLengthFunctions}  shows that
$\LyndonGrp$ has a regular free Lyndon length
function $l:\LyndonGrp\rightarrow \integers[t]\simeq\integers^{\infty}$.

Recall that any word $w$ over an alphabet $X$ has a \emph{word length} $|w|$ equal to the number of characters in $w$.
$\LyndonGrp$ is generated by $X=X_0\cup \{\t{u}{i}\sst u\in\bigcup_{j=0}^{\infty}R(H_j)\cs i\in\integers\}$, where $X$
generates $F$, so every word $w$ over $X^{\pm}$ has length $|w|$ as a word as well as Lyndon length $l(w)$ as
an element of $\LyndonGrp$.

\begin{example}[A Lyndon length function]\label{Ex:LyndonLength}
Let $F=F(a,b)$ be the free group on generators $a,b$.
We will construct a Lyndon length function $l:G\rightarrow \integers^2$ on the extension of centralizer
$G=\langle a,b,t \sst [ab,t]=1 \rangle$.
For the construction in a more general setting and for proof, refer to \cite{EffectiveJSJ} and \cite{RegularFreeLengthFunctions}.
 Let $w$ be a word over $G$.  First, write $w$ in reduced form
as an element of the HNN-extension,
\[
w=g_1 t^{a_1} g_2 t^{a_2} \cdots g_{m}t^{a_m} g_{m+1}
\]
where $g_i\in F$ for all $i$ and $[g_i,t]\neq 1$ for $i=2,\ldots,m+1$.  Let $l_F$ be the usual length function on $F$
(i.e. $l_F(w)=\min\{|u|\sst u\in\{a^{\pm 1},b^{\pm 1}\}^*\cs u=w\;\mbox{in $F$}\}$)  , and for
$M\in\integers$ set
\[
l_1 (w,M)=l_F (g_1 (ab)^{\epsilon_1 M} g_2 \cdots g_{m}(ab)^{\epsilon_m M} g_{m+1})-m l_F((ab)^M)
\]
where $\epsilon_i=\mathrm{sgn}(a_i)$.
Observe that there exists a positive integer $M_0$ such that for any $M>M_0$,
$l_1(w,M_0)=l_1(w,M)$ (in particular, $M_0=|w|$ will suffice).  Then set the Lyndon length
of $w$ to be
\[
l(w) = \left( l_1(w,M_0),\sum_{i=1}^{m} |a_i|\right).
\]
For example, the word $w=a(ab)^{11}t^{-1}aaba^{-1} t$ (which is in reduced form as written)
has word length $|w|=29$.  For its Lyndon length, use $M=30$ and compute
\[
l_1(w,30)=l_F(a(ab)^{11}(ab)^{-30}aaba^{-1}(ab)^{30})-2(60)=-21.
\]
Hence $w$ has Lyndon length $l(w)=(-21,2)$.

\end{example}

Every $\F$-group $G$ is known to embed into $\LyndonGrp$, and the embedding is effective (\cite{IrredAffineII}).
Since $G$ is finitely generated, $G$
embeds in some finitely generated subgroup $G_n$ of some $H_n$ of (\ref{Eqn:LyndonChain}),
and $G_n$ can be obtained by a sequence of finite extensions of
centralizers,
\begin{eqnarray}\label{Eqn:ChainOfFiniteExtensions}
F=G_0 < G_1 < \ldots < G_n,
\end{eqnarray}
where $G_k < H_k$ for all $k$.  That is, there are finite subsets $R(G_k)\subset R(H_{k})$ and
$T_k=\{\t{u}{i}\sst u\in R(G_k)\cs 1\leq i\leq N_k(u)\}$  such that $G_k$ is the HNN-extension
\begin{eqnarray}\label{Eqn:Gammak}
\langle G_{k-1},T_k \sst \forall\qs u\in R(G_{k-1})\cs 1\leq i,j\leq N_k(u): \; [u,\t{u}{j}]=[\t{u}{i},\t{u}{j}]=1 \rangle.
\end{eqnarray}
Denote by $X_k$ the generating set of $G_k$ such that
$X_0$ is a generating set of $F$ and $X_{k+1}=X_k\cup T_k$.

\section{The compressed word problem in $\F$-groups}
In this section we prove the following theorem.
\begin{theorem}\label{Thm:CWPforFgroups}
Let $G$ be a finitely generated fully residually free group.  Then there is an algorithm that decides the
compressed word problem for $G$ in polynomial time.
\end{theorem}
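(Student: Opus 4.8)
The plan is to induct along the chain of finite centralizer extensions (\ref{Eqn:ChainOfFiniteExtensions}) that realizes the ambient group $G_n\le\LyndonGrp$ into which $G$ embeds. Because this embedding is effective and $G$ is fixed, replacing each terminal of an input compressed word for $G$ by a fixed straight-line program for its image over $X_n^{\pm}$ produces, in polynomial time, an equivalent compressed word over $X_n^{\pm}$; so it suffices to solve the compressed word problem in each $G_k$. The base case $G_0=F$ is precisely Lohrey's Lemma \ref{Lem:CWPinF}. For the inductive step I assume a polynomial-time algorithm for $G_{k-1}$ and construct one for $G_k$, which by (\ref{Eqn:Gammak}) is an HNN-extension of $G_{k-1}$ whose stable letters $\t{u}{i}$ commute with $u$ and with one another; the associated subgroups are thus finitely generated free abelian and the associated isomorphisms are identities.

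The heart of the inductive step is a compressed form of Britton reduction. Given a straight-line program $\A$ over $X_k^{\pm}$, I would first reorganize it into a two-level program: a lower level of programs over $X_{k-1}^{\pm}$ producing the maximal $G_{k-1}$-subwords, and an upper level recording how these \emph{syllables} interleave with stable letters. By Britton's Lemma, $w_\A=1$ in $G_k$ if and only if repeatedly removing \emph{pinches} $\t{u}{i}^{\epsilon}c\,\t{u}{i}^{-\epsilon}$, in which the $G_{k-1}$-syllable $c$ lies in the associated subgroup $\langle u \rangle$, erases every stable letter and leaves a word trivial in $G_{k-1}$; stable letters sharing the same $u$ commute, so their exponents can be gathered by abelian bookkeeping. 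Testing a pinch is therefore a membership test $c\in\langle u \rangle$: since $u$ is a non-proper-power generator of a cyclic centralizer, powers of $u$ have length $l(u^m)=|m|\,l(u)$ under the regular free Lyndon length function $l$, so I can read off the candidate exponent $m$, build a straight-line program for $u^m$ of size $O(\log|m|)$, and verify $c=u^m$ by the inductive $G_{k-1}$-algorithm. The concluding triviality test, once no stable letters remain, is the same inductive algorithm.

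To stay within polynomial time I would carry out the reduction by recursion over the productions of $\A$, attaching to each non-terminal a Britton-reduced compressed normal form together with a rule for concatenating two such forms. Concatenation introduces at most one new \emph{seam}, and resolving it cancels a run of matching syllables drawn from the inner ends of the two factors. The main obstacle, and the technical core of the proof, is that a single seam may annihilate exponentially many syllables, so the cancellation cannot proceed one syllable at a time. I expect to show, in the spirit of Plandowski's compressed pattern matching, that the cancelling runs are forced to be periodic (the base syllables and the stable letters between them repeat), whence the matched region admits a polynomial-size program and can be excised in time polynomial in $|\A|$. Controlling the growth of these normal-form programs, both across the productions at one level and across the $n$ levels of the induction, then yields the overall polynomial bound; since $n$ is a constant depending only on $G$, the nested calls to the lower-level algorithms multiply the running time by only a bounded number of polynomial factors.
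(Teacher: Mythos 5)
Your outer framing (effective embedding of $G$ into a group $G_n$ built by the chain (\ref{Eqn:ChainOfFiniteExtensions}), base case handled by Lohrey's Lemma~\ref{Lem:CWPinF}) matches the paper, but your inductive step rests on a claim you explicitly defer and which is exactly the hard part: that when two Britton-reduced compressed forms are concatenated, a seam cancelling exponentially many syllables is ``forced to be periodic'' and can be excised via a polynomial-size program. Nothing in Plandowski's technique gives this: the syllables are elements of $G_{k-1}$ that agree only modulo the group relations, not as literal strings, so string-periodicity arguments do not apply directly, and you would be doing compressed pattern matching up to $G_{k-1}$-equality, which is a much stronger primitive than anything established here. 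There is also a quantitative gap even granting that claim: you maintain a normal-form program at every non-terminal of $\A$, i.e.\ through up to $|\A|$ concatenations, and without an \emph{additive} size bound per seam resolution the stored programs can grow by a multiplicative factor at each step, giving exponential blowup overall. Finally, your pinch test reads off the candidate exponent $m$ with $c\in\langle u\rangle$ from $l(u^m)=|m|\,l(u)$, but no polynomial-time algorithm for computing the Lyndon length of a \emph{compressed} word is available in the paper, and the obvious substitute (exponent-sum bookkeeping on the SLP) fails whenever $u$ has zero exponent sum in every generator, e.g.\ when $u$ is a commutator.

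The paper avoids compressed Britton reduction entirely via a ``big powers'' substitution. One defines $\varphi_{(k,P_k)}(\t{u}{i})=u^{P_k^i}$ and composes these maps down to a homomorphism $\Phi_{(n,P_n)}\colon G_n\rightarrow F$; Theorem~\ref{Thm:BPreplacement} shows, using the normal form of Lemma~\ref{Lem:ReducedForm} with its length bound $(10L)^n|w|$ together with the Lyndon length function, that for $P>(10L)^n|w|$ triviality of $\Phi_{(n,P_n)}(w)$ in $F$ is equivalent to triviality of $w$ in $G_n$. On straight-line programs this is a purely local rewrite: each production $A\rightarrow \t{u}{i}^{\epsilon}$ is replaced by a program for $u^{\epsilon P_k^i}$ of size $2|u|+\log_2 P_k^i$, and since $\log P$ is $O(|\A|)$ even though $P$ itself is exponential, the resulting SLP over $X_0^{\pm}$ has size $O(|\A|)$; a single call to Lohrey's algorithm then decides the problem. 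So the decisive idea your proposal is missing is that huge powers, which SLPs encode for free, let one push the whole problem into the free group in one shot, rather than normalizing within each HNN level; making your direct route rigorous would require developing compressed normal-form machinery for HNN extensions that is substantially harder than the substitution argument actually used.
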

Since $G$ embeds (effectively) in some $G_n$, it suffices to give a polynomial time
algorithm for the
compressed word problem in $G_n$ (Theorem~\ref{Thm:CWPinGamman}).


\subsection{Normal form}
We will need to represent elements of $G_n$  in a \emph{normal form}, which is based on
the normal form given in
 \cite{RegularFreeLengthFunctions} for infinite word elements of $\LyndonGrp$.

We define normal form in $G_n$ recursively.
For $\alpha=(\alpha_0,\alpha_1,\ldots)\in\integers[t]$ let $\sigma(\alpha)=\sgn(\alpha_d)$ where $d=\deg(\alpha)$.
A word $w$ over $X_0^{\pm}$ is written in normal form if it is
freely reduced.  A word $w$ over
$X_k^{\pm}$ is in normal form if $w$ is written as
\begin{eqnarray}\label{Eqn:NF}
w = g_1 u_{1}^{c_1} \tau_{1}^{\alpha_1} g_2 \ldots g_{m} u_{m}^{c_m} \tau_{m}^{\alpha_m} g_{m+1},
\end{eqnarray}
where $\alpha_i=(\alpha_{i1},\ldots,\alpha_{i N_k(u_i)})\in\integers^{N_k(u_i)}$,
$\tau_{i}^{\alpha_i}=\t{u_i}{1}^{\alpha_{i1}}\t{u_i}{ 2}^{\alpha_{i2}}\cdots \t{u_i}{N_k(u_i)}^{\alpha_{i N_k(u_i)}}$
and
\begin{romanenumerate}
\item for all $i$, $\alpha_i\neq 0$,
\item for each $i$, $g_i$ is a word over $X_{k-1}^{\pm}$,
\item for every $i=1,\dots,m$, either $[u_i,u_{i+1}]\neq 1$ or $[u_i,g_{i+1}]\neq 1$,
\item \label{Eqn:NoCancel} for any integers $q_i\neq 0$ with $\sgn(q_i)=\sigma(\alpha_i)$ we have 
\[
g_1 u_{1}^{q_1} g_2 \ldots g_m u_{m}^{q_m} g_{m+1}
	= g_1 \circ u_{1}^{q_1}\circ g_2\circ \ldots \circ g_m \circ u_{m}^{q_m} \circ g_{m+1}.
\]
\end{romanenumerate}
Note that we do not require the $g_i$ to be written in normal form for $G_{k-1}$.
We call $m$ the number of \emph{syllables} of $w$.

\begin{lemma}\label{Lem:ReducedForm}
For every word $w$ over $X_n^{\pm}$ there is a word $\mathrm{NF}(w)$ in normal form
such that $w=\mathrm{NF}(w)$ in $G_n$ and $|\mathrm{NF}(w)| \leq (10L)^n|w|$, where
$L=\max\{|u|\sst u\in \bigcup_{i=0}^{n} R(G_i)\}$.
\end{lemma}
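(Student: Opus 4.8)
The plan is to proceed by induction on $n$, the height of the chain of extensions of centralizers in (\ref{Eqn:ChainOfFiniteExtensions}). The base case $n=0$ is trivial: free reduction of a word $w$ over $X_0^{\pm}$ produces a word of length at most $|w| = (10L)^0|w|$. For the inductive step, I assume the result holds for words over $X_{k-1}^{\pm}$ and establish it for words over $X_k^{\pm}$.

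First I would describe a rewriting procedure that converts an arbitrary word $w$ over $X_k^{\pm}$ into the shape (\ref{Eqn:NF}). The generators $X_k = X_{k-1}\cup T_k$ split into the ``old'' letters (over $X_{k-1}^{\pm}$) and the stable letters $\t{u}{i}$. The first move is to collect maximal blocks of stable letters, using the HNN relations $[\t{u}{i},\t{u}{j}]=1$ to sort the stable letters within each block into the canonical order $\t{u}{1}^{\alpha_{i1}}\cdots \t{u}{N_k(u)}^{\alpha_{i N_k(u)}}=\tau_i^{\alpha_i}$, and using $[u,\t{u}{j}]=1$ to pull powers $u^{c_i}$ of the associated base element out of each block as prescribed by the form $u_i^{c_i}\tau_i^{\alpha_i}$. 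This produces an expression $g_1 u_1^{c_1}\tau_1^{\alpha_1}g_2\cdots$ in which the $g_i$ are words over $X_{k-1}^{\pm}$ and each $\tau_i^{\alpha_i}$ carries a nonzero exponent vector, giving conditions (i) and (ii). The key length bookkeeping at this stage is that sorting and extracting does not increase the total number of letters beyond a constant factor times $|w|$, with the factor absorbing the cost $L=\max|u|$ of writing out base-element powers.

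Next I would enforce conditions (iii) and (iv), the genuine reduction conditions for the HNN-extension (\ref{Eqn:Gammak}). Condition (iii) forbids a configuration where consecutive syllables share the same associated centralizer and no base or connecting letter separates them; whenever $[u_i,u_{i+1}]=1$ and $[u_i,g_{i+1}]=1$, the intervening $g_{i+1}$ and the exponents commute past each other, so the two syllables can be amalgamated into one, strictly reducing the syllable count $m$. Condition (iv) is the no-cancellation requirement phrased through the circular-multiplication operator $\circ$ and the Lyndon length function: it says that for any sign-consistent exponents the factorization is length-additive, i.e.\ genuinely reduced. This is exactly where I expect the main obstacle to lie. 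Verifying (iv) amounts to a Britton-type reduction argument—if the factorization failed to be length-additive, some pinch $\t{u}{j}^{-1}w'\t{u}{j}$ with $w'$ commuting with $u$ would have to occur, and I would use the defining HNN relations together with the regular free Lyndon length function $l$ from \cite{RegularFreeLengthFunctions} to either remove the pinch (lowering $m$) or certify length-additivity. Controlling that each such pinch-removal keeps the length bound intact, rather than blowing it up, is the delicate part.

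Finally I would assemble the length estimate. Each $g_i$ produced above is a word over $X_{k-1}^{\pm}$, so by the inductive hypothesis I may replace it with $\NF(g_i)$ of length at most $(10L)^{k-1}|g_i|$, and the syllable-reducing moves from the previous paragraph only decrease total length. The bound $(10L)^n|w|$ then comes from tracking how each of the finitely many rewriting operations—sorting stable letters, extracting base powers, amalgamating syllables, removing pinches—inflates length by at most a factor of $10L$ per level of the recursion, so that $n$ levels compound to $(10L)^n$. The constant $10$ is chosen generously enough to absorb the bounded number of distinct transformations at each level; the verification that no transformation exceeds this per-level budget is routine once the rewriting procedure is fixed, but it is precisely the interaction between the length function and the pinch-removal in condition (iv) that requires care to keep the accounting honest.
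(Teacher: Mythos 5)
Your skeleton (induction on $n$; first use the commutation relations to collect stable letters into blocks $\tau_i^{\alpha_i}$ and achieve conditions (i)--(iii); then handle (iv); then do the length accounting) matches the paper's proof, but at the one step you yourself flag as delicate --- condition (iv) --- your proposed mechanism is wrong, and that step is where the entire content of the lemma lives. A failure of (iv) is not witnessed by a pinch. By the time $w$ is in the form $h_1\tau_1^{\alpha_1}h_2\cdots h_m\tau_m^{\alpha_m}h_{m+1}$ with $\alpha_i\neq 0$ and (iii) holding, all Britton pinches $\t{u}{j}^{-\epsilon}w'\t{u}{j}^{\epsilon}$ have already been eliminated; yet (iv) can still fail simply because some $h_{i+1}$ cancels against powers of $u_i$ inside $G_{n-1}$ --- for instance $h_2=u_1^{-5}g$ with $[u_1,g]\neq 1$ admits no pinch but destroys length-additivity of $u_1^{q}h_2$ for $q>0$. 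So there is nothing to ``remove''; instead the paper \emph{inserts buffers}: it rewrites $h_i\tau_i^{\alpha_i}\longrightarrow (h_iu_i^{\sigma(\alpha_i)r_i})(u_i^{-\sigma(\alpha_i)}\tau_i^{\alpha_i})$ and symmetrically on the right, with $r_i=(10L)^{n-1}|h_i|+1$, which is where the factors $u_i^{c_i}$ in (\ref{Eqn:NF}) come from. The justification that these buffers suffice is the key fact (\ref{Eqn:keyFact}), from Lemma~7.1 of \cite{RegularFreeLengthFunctions}: $u^{r+1}g=u\circ(u^rg)$ once $r$ exceeds the number of syllables of a normal form of $g$. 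This is also the \emph{only} place the induction hypothesis enters --- to bound the syllable count of $\mathrm{NF}(h_i)$ by $(10L)^{n-1}|h_i|$. Note the paper pointedly does \emph{not} replace the $g_i$ by $\mathrm{NF}(g_i)$ (the definition of normal form explicitly does not require the $g_i$ to be in normal form), whereas your plan does so; that substitution is unnecessary and plays no role in the bound.

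With the buffering fixed, the length estimate is a closed-form computation --- $|g_i|\leq r_i|u_{i-1}|+|h_i|+r_i|u_i|$ and $|c_i|\leq r_i+r_{i+1}$, summing to $(10L)^n|w|$ --- with no open-ended sequence of reductions whose cumulative cost must be controlled. In your write-up, by contrast, the assertion that the per-level inflation check is ``routine once the rewriting procedure is fixed'' concedes exactly the point at issue: mere existence of normal forms is already in \cite{RegularFreeLengthFunctions}, and what this lemma adds is the explicit bound, for which your pinch-removal scheme provides no handle (each removal changes the word in ways you have not bounded, and (iv) is not reachable by such removals at all). The gap is therefore genuine: the missing idea is the padding construction with $r_i$ calibrated by the inductively bounded syllable count via (\ref{Eqn:keyFact}).
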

\begin{proof}

Proceed by induction on $n$.  For $n=0$, $G_0$ is a free group and reduced forms are simply freely-reduced words,
so they exist with $|\mathrm{NF}(w)|\leq |w|$. Assume that the theorem holds for $n-1$.

Using the commutation relations $[u,\t{u}{i}]=[\t{u}{i},\t{u}{j}]=1$ in $G_n$, and an algorithm for the
word problem in $G_{n-1}$ (an algorithm for the conjugacy problem, hence for the word problem,
is given in \cite{RegularFreeLengthFunctions}), we can bring the word
$w$ into the form
\[
w'=h_1 \tau_{1}^{\alpha_1}h_2\ldots h_m\tau_{m}^{\alpha_m}h_{m+1},
\]
where $\tau_{i}^{\alpha_i}$ are as in (\ref{Eqn:NF}) with $\alpha_i\neq 0$ for all $i$,
and for every $i=1,\dots,m$ either $[u_i,u_{i+1}]\neq 1$ or $[u_i,h_{i+1}]\neq 1$.
Notice that $|w'|\leq |w|$.

To produce a reduced form from $w'$, we appeal to \cite{RegularFreeLengthFunctions}, which constructs normal forms
for elements of $\LyndonGrp$, but without proof of the length bound that we require.
Only minor changes to that construction are needed, and we draw the reader's attention to the relevant sections.

The key fact is the following: for any word $g$ over $X_{n-1}^{\pm}$ and any $u\in R(G_{n-1})$ we have that,
for any $r> (10L)^{n-1}|g|$,
\begin{eqnarray}\label{Eqn:keyFact}
u^{r+1} g = u\circ (u^{r}g) & \mbox{and} & g u^{r+1}=(gu^r)\circ u.
\end{eqnarray}
The proof of this fact is part of Lemma~7.1 of \cite{RegularFreeLengthFunctions}, which shows that the above holds as long as $r$ is greater than
the number of syllables in a normal form of $g$.  Since $g\in G_{n-1}$, we have by induction that
$|\mathrm{NF}(g)|\leq (10L)^{n-1}|g|$ hence $\mathrm{NF}(g)$ has at most $(10L)^{n-1}|g|$ syllables.

There is an isomorphism $\phi$ from our HNN-representation of $\LyndonGrp$ to the infinite words representation.  The
word $w'$ corresponds, via $\phi$, to what in \cite{RegularFreeLengthFunctions} is called a
\emph{reduced R-form}.  Lemma~6.13 of \cite{RegularFreeLengthFunctions} constructs normal forms
from reduced R-forms, and the first step of this construction produces
a form that corresponds, via $\phi$, to our
normal form.  The construction attaches powers of $u_{i-1}$ and $u_i$ to $h_i$,
using rewritings of the form
\begin{eqnarray*}\label{Eqn:transformation}
h_{i} \tau_{i}^{\alpha_{i}} & \longrightarrow &
    (h_i u_i^{\sigma(\alpha_i)r_i})(u_i^{-\sigma(\alpha_i)}\tau_i^{\alpha_i}), \\
 \tau_{i}^{\alpha_{i}}h_{i+1} & \longrightarrow &
   (\tau_i^{\alpha_i} u_i^{-\sigma(\alpha_i)})(u_i^{\sigma(\alpha_i)r_{i+1}}h_{i+1}),
\end{eqnarray*}
where $r_i=(10L)^{n-1}|h_i|+1$,  with property (\ref{Eqn:keyFact}) being used to achieve condition 
(iv).
It produces a normal form
\[
\mathrm{NF}(w')=g_1 u_{1}^{c_1} \tau_{1}^{\alpha_1} g_2 \ldots g_{m} u_{m}^{c_m} \tau_{m}^{\alpha_m} g_{m+1},
\]
where $|g_i|\leq r_i|u_{i-1}|+|h_i|+r_i|u_i|$ and $|c_i|\leq r_i+r_{i+1}$ for all $i$.  Then the length of $\mathrm{NF}(w')$
has the bound
\begin{eqnarray*}
|\mathrm{NF}(w')|& =&\sum_{i=1}^m \left(|\tau_i^{\alpha_i}| +|c_i| |u_i| + |g_i| \right)+|g_{m+1}| \\
    & \leq & \left(|w'| -\sum_{i=1}^{m+1} |h_i|\right) + \sum_{i=1}^m \left((r_i+r_{i+1})L+2r_i L+|h_i|\right)+2r_{m+1}L+|h_{m+1}|\\
    & \leq & |w| + 4L\sum_{i=1}^{m+1}r_i \leq |w'| +4L(10^{n-1}L^{n-1}|w'|+|w'|)\\
    & \leq & (10L)^n |w|
\end{eqnarray*}
as required.
\end{proof}

\begin{example}[Normal forms]
Consider again the word $w=a(ab)^{11}t^{-1}aaba^{-1}t$ from Example~\ref{Ex:LyndonLength}.
A normal form for $w$ is given by
\[
a \left((ab)^{12}\right)t^{-1} (b^{-1}a^{-1}aaba^{-1}ab) \left((ab)^{-1}\right) t
\]
where $g_1=a$, $c_1=12$, $g_2=b^{-1}a^{-1}aaba^{-1}ab$, $c_2=-1$.
It is not necessray to freely reduce $g_2$, though we may do so if desired.
Notice that for any $q_1<0$ and $q_2>0$,
\[
a (ab)^{q_1} (b^{-1}a^{-1}aaba^{-1}ab)  (ab)^{q_2}
	 = a \circ (ab)^{q_1}\circ (b^{-1}a^{-1}aaba^{-1}ab) \circ (ab)^{q_2} ,
\]
satisfying (iv).
\end{example}


\subsection{Algorithm for the compressed word problem}
To solve the compressed word problem in $G_n$, we construct a reduction of the word problem in $G_n$
to the word problem in $F$, then apply the
reduction to compressed words and use Lemma~\ref{Lem:CWPinF} to solve the compressed word problem in $F$.

\begin{definition}
For $P\in\naturals$,
define a homomorphism $\varphi_{(n,P)}: G_n \rightarrow G_{n-1}$ by
setting $\varphi_{(n,P)}$ to be the identity on $G_{n-1}$ and setting
$\varphi_{(n,P)} (\t{u}{i}) = u^{P^i}$.
\end{definition}
Note that $\varphi_{(n,P)}$ is a homomorphism since, for every $i,j$,
\[
[u,\varphi_{(n,P)} (\t{u}{i})]=[u,u^{P^i}]=1=[u^{P^i},u^{P^j}]=[\varphi_{(n,P)}(\t{u}{i}),\varphi_{(n,P)}(\t{u}{j})].
\]

Let $w$ be a word over $X_n^{\pm}$.
Recalling from (\ref{Eqn:Gammak}) that $N_k(u)$ is the number of letters $\t{u}{i}$ for a given $u\in R(G_k)$,
set $N=1+\max\{N_k(u)\sst k\in [0,n-1]\cs u\in \bigcup_{i=0}^{n-1}R(G_i)\}$.
For $P\in\naturals$ define a sequence of $n$ constants $P_n,P_{n-1},\ldots,P_1$ by $P_n=P$ and
\[
P_{i-1} = P_i^N \cdot L,
\]
i.e. $P_{n-i}=P^{N^{i}}L^{N^{i-1}}L^{N^{i-2}}\cdots L$,
and define a homomorphism
$\Phi_{(n,P_n)}:G_n\rightarrow F$ by  the composition
$\Phi_{(n,P)} = \varphi_{(1,P_1)} \varphi_{(2,P_2)} \cdots \varphi_{(n,P_n)}$.
The sequence is defined so that when
$P_n>(10L)^n |w|$,  $P_{i-1}$ is an upper bound on the length of
$\varphi_{(i,P_i)}\cdots\varphi_{(n,P_n)}(w)$, as we will see below.

\begin{theorem}\label{Thm:BPreplacement}
Let $G_n$ be obtained by a sequence of extensions of centralizers as in (\ref{Eqn:ChainOfFiniteExtensions})
and let $w$ be a word over $X_n^{\pm}$.
Then for any $P>(10L)^n |w|$,
\[
\Phi_{(n,P_n)}(w) = 1 \mbox{ in $F$} \iff w=1 \mbox{ in $G_n$}.
\]
\end{theorem}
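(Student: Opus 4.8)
The plan is to prove the biconditional by induction on $n$, reducing the claim about $\Phi_{(n,P_n)}$ to a claim about the single-step map $\varphi_{(n,P_n)}:G_n\to G_{n-1}$, then appealing to the inductive hypothesis for $G_{n-1}$. The base case $n=0$ is immediate since $\Phi$ is then the identity on $F$. For the inductive step, I would factor $\Phi_{(n,P_n)} = \Phi_{(n-1,P_{n-1})}\circ\varphi_{(n,P_n)}$, where $\Phi_{(n-1,P_{n-1})}:G_{n-1}\to F$ is built from the constants $P_{n-1},\ldots,P_1$. Since the inductive hypothesis tells us that $\Phi_{(n-1,P_{n-1})}(v)=1$ in $F$ iff $v=1$ in $G_{n-1}$ (provided $P_{n-1}$ is large enough relative to $|v|$), it suffices to show that $\varphi_{(n,P_n)}(w)=1$ in $G_{n-1}$ iff $w=1$ in $G_n$, and to verify that the length bound needed to apply the hypothesis to $v=\varphi_{(n,P_n)}(w)$ is met.

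**The heart of the argument** is the single-step equivalence, and this is where I expect the main obstacle. The direction ``$w=1$ in $G_n \Rightarrow \varphi_{(n,P_n)}(w)=1$'' is free, since $\varphi_{(n,P_n)}$ is a homomorphism. The hard direction is the converse: I must show that $\varphi_{(n,P_n)}$ does not collapse a nontrivial element of $G_n$ to the identity. The natural strategy is to put $w$ into the normal form \eqref{Eqn:NF} guaranteed by Lemma~\ref{Lem:ReducedForm}, say
\[
w = g_1 u_1^{c_1}\tau_1^{\alpha_1} g_2 \cdots g_m u_m^{c_m}\tau_m^{\alpha_m} g_{m+1},
\]
and argue that applying $\varphi_{(n,P_n)}$ (which replaces each $\tau_i^{\alpha_i}=\prod_j \t{u_i}{j}^{\alpha_{ij}}$ by $u_i^{\,\sum_j \alpha_{ij}P_n^{\,j}}$) yields a word over $X_{n-1}^{\pm}$ whose syllable decomposition into $\circ$-products is preserved. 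Concretely, I would show that the exponent $e_i = c_i + \sum_j \alpha_{ij}P_n^{\,j}$ attached to $u_i$ is large enough in absolute value, and of sign $\sigma(\alpha_i)$, so that condition (iv) of the normal form forces
\[
\varphi_{(n,P_n)}(w) = g_1 \circ u_1^{e_1}\circ g_2 \circ \cdots \circ g_m \circ u_m^{e_m}\circ g_{m+1}.
\]
Because this is a genuine $\circ$-decomposition (no cancellation across syllables), its Lyndon length is the sum of the lengths of the pieces, which is strictly positive whenever $m\geq 1$; and when $m=0$ we have $w=g_1\in G_{n-1}$ and the claim follows directly from the inductive hypothesis. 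The key technical point making the exponent $e_i$ have the correct sign is the base-$P_n$ (more precisely, radix-$N$) structure of the constants: since $P_n>(10L)^n|w|$ dominates the coefficients $\alpha_{ij}$ (each bounded by $|w|$), the leading term $\alpha_{i,d_i}P_n^{\,d_i}$ with $d_i=\deg(\alpha_i)$ dominates the sum and fixes $\sgn(e_i)=\sigma(\alpha_i)$, with $|e_i|\geq P_n > (10L)^{n-1}|g|$ large enough to invoke the key fact \eqref{Eqn:keyFact}.

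**The remaining bookkeeping** is to confirm that the length estimate propagates through the induction. I would bound $|\varphi_{(n,P_n)}(w)|$ using the normal-form length bound from Lemma~\ref{Lem:ReducedForm}: each substituted power $u_i^{e_i}$ has word length at most $|e_i|\cdot L$ with $|e_i| \leq |w|\cdot P_n^{\,N-1}$ roughly, so that $|\varphi_{(n,P_n)}(w)| \leq P_n^{\,N}L = P_{n-1}$ after accounting for the worst case over all syllables. This is exactly the recursion $P_{i-1}=P_i^N L$ built into the definition of the constants, and it is the reason that sequence was chosen; the inequality $P_n>(10L)^n|w|$ then guarantees, at each descending level $i$, that $P_{i}>(10L)^{i}|\,\varphi_{(i+1,P_{i+1})}\cdots\varphi_{(n,P_n)}(w)\,|$, which is precisely the hypothesis required to apply the inductive step one level down. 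Assembling these pieces — the sign-preservation of $e_i$, the resulting $\circ$-decomposition, and the length recursion — completes the induction and yields the biconditional.
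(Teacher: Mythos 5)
Your proposal is correct and follows essentially the same route as the paper's proof: the same induction on $n$ via the factorization $\Phi_{(n,P_n)}=\Phi_{(n-1,P_{n-1})}\circ\varphi_{(n,P_n)}$, the same normal-form argument showing each exponent $e_i=c_i+\sum_j\alpha_{ij}P_n^{\,j}$ is nonzero of sign $\sigma(\alpha_i)$ because $P_n$ dominates the coefficients, so that condition (iv) yields a genuine $\circ$-decomposition of positive Lyndon length, and the same worst-case estimate $|\varphi_{(n,P_n)}(w)|\leq |w|P_n^{N-1}L < P_n^N L = P_{n-1}$ to license the inductive call. The only cosmetic difference is that condition (iv) requires only the sign condition on the $e_i$, not the magnitude bound you also verify, so that extra check is harmless but unnecessary.
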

\begin{proof}
Since $\Phi_{(n,P_n)}$ is a homomorphism, if $w=1$ in $G_n$ then $\Phi_{(n,P_n)}(w)=1$ in $F$. It remains to show
that for any $P>(10L)^n |w|$,
\[
w\neq 1 \mbox{ in $G_n$} \implies \Phi_{(n,P)}(w)\neq 1 \mbox{ in $F$}.
\]
We proceed by induction on $n$.
Letting $\Phi_{(0,P_0)} :F\rightarrow F$ be the identity map, there is nothing to prove in the  base case $n=0$.
Assume the theorem holds up to $n-1$ and that $w\neq 1$ in $G_n$.
If  for all $\t{u}{i}\in T_n$ and
$\epsilon=\in\{\pm 1\}$ the letter
$\t{u}{i}^{\epsilon}$ does not appear in $w$, then $w\in G_{n-1}$ so
$\Phi_{(n,P_n)}(w)=\Phi_{(n-1,P_{n-1})}(w)$.
Since $P_{n-1}>P_n>(10L)^{n-1} |w|$ the induction assumption applies,
so $\Phi_{(n,P_n)}(w)\neq 1$ in $F$.

Now assume that $\t{u}{i}^{\epsilon}$ appears in $w$ for at least one $\t{u}{i}\in T_n$. Let
\[
\mathrm{NF}(w) = g_1 u_{1}^{c_1} \tau_{1}^{\alpha_1} g_2 \ldots g_{m} u_{m}^{c_m} \tau_{m}^{\alpha_m} g_{m+1}
\]
be a normal form of $w$, as in Lemma~\ref{Lem:ReducedForm}.
Since $\t{u}{i}^{\epsilon}$ appears we have $m\geq 1$.
We claim that $\varphi_{(n,P_n)}(u_{i}^{c_i}\tau_{i}^{\alpha_i})$ is a non-zero power of $u_{i}$
of sign $\sigma(\alpha_i)$.
We simplify notation by setting $u=u_i$, $a=\alpha_i$, and $d=N_{n-1}(u)$.
We have
\[
\varphi_{(n,P_n)}(\tau_{i}^{\alpha_i}) =
	\varphi_{(n,P_n)}(\t{u}{1}^{a_1}\cdots \t{u}{d}^{a_{d}}) =
	u^{ a_{d}P_n^{d}+a_{d-1}P_n^{d-1}+\ldots +a_1 P_n }
\]
and we want a lower bound of the magnitude of the exponent of $u$.
Since, for all $s$,
\[
|a_s|\leq |\mathrm{NF}(w)|\leq (10L)^n |w| \leq P_n-1,
\]
we have that
\[
\sum_{s=1}^{d-1} |a_s|P_n^s \leq \sum_{s=1}^{d-1}(P_n-1)P_n^s = P_n^{d}-P_n.
\]
Hence $|a_d P_n^d|-|a_{d-1}P_n^{d-1}+\ldots+a_1 P_n|\geq P_n$, and so
\[
a_{d}P_n^{d}+a_{d-1}P_n^{d-1}+\ldots +a_1 P_n = C_i
\]
where $|C_i|\geq P_n$ and $\mathrm{sgn}(C_i)=\mathrm{sgn}(a_d)=\sigma(a)$. Then
\[
\varphi_{(n,P_n)}(u_{i}^{c_i}\tau_{i}^{\alpha_i}) = u^{C_i+c_i}
\]
with $C_i+c_i\neq 0$ (since $|c_i|\leq |\mathrm{NF}(w)<P_n$) and $\mathrm{sgn}(C_i+c_i)=\sigma(\alpha_i)$,
proving the claim.

Since $\varphi_{(n,P_n)}$ is
the identity on $G_{n-1}$, we have, using property (iv) of normal forms,
\[
\varphi_{(n,P_n)}(w)=\varphi_{(n,P_n)}(\mathrm{NF}(w))
    = g_1 \circ u_1^{C_1+c_1}\circ g_2 \circ \cdots \circ g_{m}\circ u_m^{C_m+c_m}\circ g_{m+1}.
\]
In particular, $l(\varphi_{(n,P_n)}(w))\geq l(u_1^{C_1+c_1})>0$
hence $\varphi_{(n,P_n)}(w)\neq 1$ in $G_{n-1}$. We have
$\Phi_{(n,P_n)}(w)=\Phi_{(n-1,P_{n-1})}(\varphi_{(n,P)} (w))$ and we can apply the induction hypothesis
to $\varphi_{(n,P)} (w)$
since $P_{n-1}$ is large enough.  Indeed, in the worst case $w=\t{u}{i}^{|w|}$ where
$|u|=L$ and $i=N-1$ making
\[
|\varphi_{(n,P_n)}(w)|= |u^{P_n^{N-1}|w|}| = |w|P_n^{N-1}L<P_n^{N}L=P_{n-1},
\]
so by induction $1\neq\Phi_{(n-1,P_{n-1})}(\varphi_{(n,P_n)}(w))=\Phi_{(n,P_n)}(w)$ in $F$.
\end{proof}

We now can solve the word problem in $G_n$ by
setting $P=(5L)^n|w|+1$ and checking if $\Phi_{(n,P_n)}(w)$ is
trivial in $F$.
Notice that the bound on the length of $\Phi_{(n,P_n)}(w)$ is given by
\[
P_0= P^{N^{n}}L^{N^{n-1}}L^{N^{n-2}}\cdots L.
\]
We use this reduction to solve the \emph{compressed} word problem in $G_n$.
\begin{theorem}\label{Thm:CWPinGamman}
Let $G_n$ be a group obtained from a free group
by a finite sequence of finite extensions of centralizers as in
(\ref{Eqn:ChainOfFiniteExtensions}).
There is an algorithm that decides the
compressed word problem for $G_n$ in polynomial time.
\end{theorem}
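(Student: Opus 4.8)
The plan is to turn the homomorphic reduction of Theorem~\ref{Thm:BPreplacement} into a procedure on compressed words, exploiting the fact that although $\Phi_{(n,P_n)}$ expands each generator into an enormously long word over $X_0^{\pm}$, that word is a power of a fixed short word and so admits a small SLP. Given an SLP $\A$ for $w=w_{\A}$ over $X_n^{\pm}$, I would first choose $P=P_n$ large enough that Theorem~\ref{Thm:BPreplacement} applies. Since $|w|\leq 2^{|\A|}$, setting $P=(10L)^n 2^{|\A|}+1$ (or using the exact value of $|w|$, computable from $\A$ in polynomial time) gives $P>(10L)^n|w|$ while keeping $\log_2 P = O(|\A|)$. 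Consequently each constant $P_k = P_n^{N^{n-k}}L^{(N^{n-k}-1)/(N-1)}$ also has $\log_2 P_k = O(|\A|)$, since $n$, $N$, $L$ are fixed by $G_n$.

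Next I would build, level by level, an SLP $\A^{(0)}$ producing $\Phi_{(n,P_n)}(w)$ over $X_0^{\pm}$, mirroring the composition $\Phi_{(n,P_n)}=\varphi_{(1,P_1)}\cdots\varphi_{(n,P_n)}$. To apply a single $\varphi_{(k,P_k)}$ to an SLP $\A^{(k)}$ over $X_k^{\pm}$, I keep the binary (concatenation) productions unchanged and only modify the terminal productions: a production $A_i\to x$ with $x\in X_{k-1}^{\pm}$ is left alone, while $A_i\to\t{u}{j}^{\pm 1}$ is redirected to the root of a sub-SLP producing $u^{\pm P_k^{j}}$, where $u$ is a fixed word over $X_{k-1}^{\pm}$ of length at most $L$. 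Such a power-SLP is built by repeated squaring from the binary expansion of $P_k^{j}$, so it has size $O(\log(P_k^{j}))=O(|\A|)$; moreover there are only $|T_k|$, a constant, distinct powers to build at each level, and these sub-SLPs can be shared. Hence each level adds only $O(|\A|)$ new non-terminals, and after the $n$ (constantly many) levels the resulting program $\A^{(0)}$ has size $O(|\A|)$. Ordering the non-terminals with the shared power-SLPs at the bottom keeps every production compatible with the index condition of an SLP, so $\A^{(0)}$ is legitimate.

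Finally I would run Lohrey's algorithm (Lemma~\ref{Lem:CWPinF}) on $\A^{(0)}$ to decide whether $w_{\A^{(0)}}=\Phi_{(n,P_n)}(w)$ is trivial in $F$; by Theorem~\ref{Thm:BPreplacement} this answers whether $w=1$ in $G_n$. Every step---computing $P$, constructing the sub-SLPs, redirecting terminals, and invoking Lemma~\ref{Lem:CWPinF} on a program of size $O(|\A|)$---runs in time polynomial in $|\A|$, yielding the claimed polynomial-time algorithm.

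The step I expect to be the crux is the size control in the level-by-level construction: one must verify that representing the exponentially long images $u^{P_k^{j}}$ by SLPs, and composing $n$ such homomorphisms, leaves the program of polynomial size. This hinges on two points I would check carefully---that each exponent $P_k^{j}$ has only $O(|\A|)$ bits, so repeated squaring yields a polynomial-size sub-SLP, and that sharing the constantly many distinct power-SLPs at each level prevents any multiplicative blow-up across levels. Notably, the algorithm never needs to compute a normal form of the compressed word; the normal form of Lemma~\ref{Lem:ReducedForm} and Theorem~\ref{Thm:BPreplacement} enter only through the correctness of the reduction, not through the computation itself.
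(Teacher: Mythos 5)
Your proposal is correct and takes essentially the same approach as the paper's own proof: choose $P$ so that $\log_2 P = O(|\A|)$, replace each terminal production $A\rightarrow \t{u}{i}^{\pm 1}$ level by level with a repeated-squaring sub-SLP producing $u^{\pm P_k^i}$, and run Lohrey's algorithm (Lemma~\ref{Lem:CWPinF}) on the resulting linear-size program, with correctness coming from Theorem~\ref{Thm:BPreplacement}. The only differences are cosmetic: the paper sets $P=(10L)^n|w_{\A}|+1$ directly, and your explicit remarks about sharing the constantly many power-SLPs per level and ordering non-terminals spell out bookkeeping the paper leaves implicit.
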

\begin{proof}
Let $\A$ be a compressed word over $X_n^{\pm}$.  For any word $w$ and any $q\in\integers$ we can write a
straight-line program $W^q$ of
size $2|w|+\log_2|q|$ producing
$w^q$.  Indeed, the root production is $W^q\rightarrow W^{q/2} W^{q/2}$, where $W^{q/2}$ produces $w^{q/2}$, and
we continue by induction (make the appropriate changes when $q$ is odd), noting that we get at most
$\log_2 |q|$ non-terminals
of the form $W^p$.
We can obtain the program $W^1$, which produces $w$ and has size $2|w|$,
by successively dividing $w$ in half.
Consequently, for each
$u\in R(G_n)$ and $q\in\integers$, we have an SLP with root
$U^q$ producing $u^q$ and having size $2|u|+\log_2 |q|$.

Set $P=(10L)^n|w_{\A}|+1$ and build an SLP $\A_n$ by replacing every production of $\A$ of the
form
\[
A\rightarrow \t{u}{i}^{\epsilon},
\]
 where $\t{u}{i}\in T_n$ and $\epsilon=\pm 1$,
by
\[
A\rightarrow U^{\epsilon P_n^i}.
\]
Notice that $w_{\A_n}=\varphi_{(n,P_n)}(w_{\A})$.  Repeat this replacement process
for $\A_n$ to produce $\A_{n-1}$ and continue until we get $\A_1$, which is an SLP producing $\Phi_{(n,P)}(w_{\A})$.
By Theorem~\ref{Thm:BPreplacement}, $w_{\A_1}=1$ in $F$ if and only if $w_{\A}=1$ in $G_n$
so we now apply Lohrey's
algorithm (Lemma~\ref{Lem:CWPinF}) to decide if $w_{\A_1}=1$ in $F$.

We need to show that the size of $\A_1$ is polynomial (in fact, linear) in the size of $\A$.  At each level $k$,
we add, for each $u\in R(G_k)$,
programs $U^{P^1},U^{P^2},\ldots,U^{P^{N_k(u)}}$.  Recalling that
$N=1+\max\{N_k(u)\sst k\in [0,n-1]\cs u\in \bigcup_{i=0}^{n-1}R(G_i)\}$, each new $U^{P^i}$
adds less than
\[
2|u|+\log_2 |P^i|\leq 2L+\log_2(P_k^N)
\]
new non-terminals to $\A_k$. Letting $M=\mathrm{max}_k\{|R(G_k)|\}$, level $k$ introduces less than
\[
2LM + NM\log_2(P_k)
\]
new non-terminals.  In total, over all $n$ levels, the number of new non-terminals is bounded by
\[
2nLM + NM\sum_{i=0}^{n-1} \log_2 (P_{n-i}).
\]
Noting that $L,M,n$ are constants (i.e. they depend of $G_n$, not on $w$) and recalling
$P_{n-i}=P^{N^{i}}L^{N^{i-1}}L^{N^{i-2}}\cdots L$,
we have
that the number of new non-terminals is in
\begin{eqnarray*}
O\left(\sum_{i=0}^{n-1}\log(P_{n-i})\right)&=&O\left(\sum_{i=0}^{n-1}N^i\log(P)\right)
= O(\log(P))\\
&=& O\left(\log((10L)^n 2^{|\A|}+1)\right) = O(|\A|).
\end{eqnarray*}
Therefore $|\A_1|\in O(|\A|)$ and since Lohrey's algorithm runs in polynomial time in $|\A_1|$
we have a polynomial time algorithm for the compressed word problem in $G_n$.
\end{proof}

\section{Word problem in the automorhpism group of an $\F$-group}
In \cite{Schleimer08}, Schleimer uses a polynomial time algorithm for the compressed word problem in a free group
to produce a polynomial time algorithm for the word problem in its automorphism group.  We apply the same method
to $\F$-groups.

\begin{theorem}\label{Thm:WPinAut}
Let $G$ be a finitely generated fully residually free group.  Then the word problem for $\Aut(G)$ is decidable in polynomial time.
\end{theorem}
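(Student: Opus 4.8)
The plan is to follow Schleimer's reduction \cite{Schleimer08}, replacing his use of Plandowski's comparison algorithm by our compressed word problem solver for $\F$-groups (Theorem~\ref{Thm:CWPforFgroups}). Fix a finite generating set $S$ of $\Aut(G)$, where each $\phi\in S^{\pm}$ is specified by its action on the generators $X=\{x_1,\dots,x_r\}$ of $G$, i.e.\ by the finite list of words $\phi(x_1),\dots,\phi(x_r)$ over $X^{\pm}$. An input to the word problem is a word $\Phi=\phi_1\phi_2\cdots\phi_k$ with $\phi_t\in S^{\pm}$, and $\Phi=1$ in $\Aut(G)$ if and only if $\Phi(x_j)=x_j$ in $G$ for every $j=1,\dots,r$. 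Since $r$ is a constant, it suffices to decide each of these $r$ equalities in polynomial time, and each reduces to deciding whether a compressed word producing $\Phi(x_j)\,x_j^{-1}$ is trivial in $G$.

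The heart of the argument is to build, in time polynomial in $k$, an SLP of polynomial size producing $\Phi(x_j)$ for each $j$. I would do this by iterating over the prefixes $\Theta_t=\phi_1\cdots\phi_t$, maintaining at each stage SLPs $\mathcal{A}^{(t)}_1,\dots,\mathcal{A}^{(t)}_r$ producing $\Theta_t(x_1),\dots,\Theta_t(x_r)$, starting from the trivial $\mathcal{A}^{(0)}_j$ producing $x_j$. Since $\Theta_t=\Theta_{t-1}\circ\phi_t$ and $\Theta_{t-1}$ is a homomorphism, $\Theta_t(x_j)=\Theta_{t-1}(\phi_t(x_j))$ is obtained from the fixed short word $\phi_t(x_j)=y_1\cdots y_p$ (with $p$ bounded by the constant $C=\max_{\phi\in S^{\pm},\,l}|\phi(x_l)|$) by substituting $\Theta_{t-1}(x_l)^{\pm1}$ for each letter $y_i=x_l^{\pm1}$. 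Thus a new root for $\mathcal{A}^{(t)}_j$ is produced by a chain of at most $p-1$ new binary productions concatenating references to the roots of the SLPs $\mathcal{A}^{(t-1)}_l$ and of their inverses; here I use that from any SLP producing $w$ one obtains an SLP of the same size producing $w^{-1}$, by reversing each production $A\rightarrow BC$ to $A^{-1}\rightarrow C^{-1}B^{-1}$ and each $A\rightarrow x$ to $A^{-1}\rightarrow x^{-1}$. Each stage therefore adds only $O(rC)$ non-terminals, so $|\mathcal{A}^{(k)}_j|=O(k\,rC)=O(k)$, which is linear in the length of $\Phi$.

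Finally, for each $j$ I form an SLP producing $\Phi(x_j)\,x_j^{-1}$ by concatenating $\mathcal{A}^{(k)}_j$ with the trivial SLP for $x_j^{-1}$, and apply Theorem~\ref{Thm:CWPforFgroups} to decide whether it represents $1$ in $G$; the algorithm reports $\Phi=1$ precisely when all $r$ of these tests succeed. Each test runs in polynomial time in the linear-sized SLP, and there is a constant number of them, so the whole procedure is polynomial. The only point requiring real care is the size control in the SLP construction: the iterated substitution must reuse the previously built non-terminals \emph{by reference} rather than copying them, and the inverse SLPs must be produced without blow-up. Given these, the reduction to Theorem~\ref{Thm:CWPforFgroups} is immediate and everything else is bookkeeping.
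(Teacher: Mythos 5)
Your reduction is essentially the paper's (it is Proposition~2 of \cite{LohreySchleimer07}, which the paper quotes and sketches): maintaining SLPs for $\Theta_t(x_1),\dots,\Theta_t(x_r)$ along prefixes, keeping inverse programs alongside, and finishing with $r$ calls to the compressed word problem solver of Theorem~\ref{Thm:CWPforFgroups} is exactly the construction with the non-terminals $A_{j,p}$, $\overline{A_{j,p}}$ in the paper, and your size and correctness analysis of it is fine.

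The genuine gap is your opening move: ``Fix a finite generating set $S$ of $\Aut(G)$, where each $\phi\in S^{\pm}$ is specified by its action on the generators.'' This assumes precisely what the paper spends the entire second half of its proof establishing, and it is not a harmless normalization: for a general finitely generated (even finitely presented) group, $\Aut(G)$ need not be finitely generated at all --- $\Aut(BS(2,4))$, by a result of Collins and Levin, is a standard example --- so both the existence of a finite generating set and the ability to \emph{construct} one with each generator given by words as in (\ref{Eqn:AutsGivenByWords}) are theorems about $\F$-groups, not part of the problem setup. The paper proves them as follows: when $G$ is freely indecomposable, the description of $\Aut(G)$ via an Abelian JSJ-decomposition in \cite{BumaginIso} yields a finite, explicitly describable generating set, and the JSJ-decomposition is effectively computable for $\F$-groups (Theorem~13.1 of \cite{EffectiveJSJ}); in general one takes the (effectively computable) Grushko decomposition $G=G_1*\cdots*G_k*F_r$ and invokes the Fouxe-Rabinovitch/Gilbert generation of the automorphism group of a free product \cite{Gilbert85} by permutation automorphisms, factor automorphisms (finitely generated by the indecomposable case plus Nielsen automorphisms of $F_r$), and Whitehead automorphisms, all of which are given by their action on generators. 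Without this component your argument proves only the conditional statement ``if a finite generating set of $\Aut(G)$ in the form (\ref{Eqn:AutsGivenByWords}) is given, then its word problem is polynomial time,'' which falls short of the theorem as stated.
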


The theorem follows from Theorem~\ref{Thm:CWPforFgroups} and known results,
which we collect and summarize here.
The main idea is that the word problem in $\Aut(G)$ reduces to the compressed word problem in $G$:
\begin{lemma}[Proposition 2 of \cite{LohreySchleimer07}]
Let $G$ be a finitely generated group and $H$ a finitely generated subgroup of $\Aut(G)$.  Then the
word problem in $H$ reduces in logarithmic space to the compressed word problem in $G$.
\end{lemma}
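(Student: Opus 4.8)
The plan is to reduce the question ``does the word $w = \psi_1 \psi_2 \cdots \psi_k$ (over a fixed finite generating set $\{\theta_1^{\pm 1},\ldots,\theta_s^{\pm 1}\}$ of $H$) represent the identity of $\Aut(G)$?'' to a constant number of queries to the compressed word problem in $G$. Fix a finite generating set $X$ of $G$. The composite automorphism $\Psi = \psi_1 \circ \cdots \circ \psi_k$ equals the identity precisely when $\Psi(x)=x$ in $G$ for every $x\in X$, since an endomorphism of $G$ is determined by its values on a generating set. The images $\Psi(x)$ may have length exponential in $k$, so I would not write them out explicitly; instead I would encode each $\Psi(x)$ by a straight-line program built directly from $w$, and test $\Psi(x)=x$ by asking the compressed word problem whether the value of that program, multiplied by $x^{-1}$, is trivial in $G$.

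For the construction, introduce for each letter $y\in X^{\pm}$ and each level $j\in\{0,1,\ldots,k\}$ a non-terminal $B_{j,y}$ intended to produce $(\psi_1\circ\cdots\circ\psi_j)(y)$. Set $B_{0,y}\to y$. Each $\psi_j$ is one of the finitely many fixed automorphisms $\theta_l^{\pm 1}$, so $\psi_j(y)$ is a fixed word $z_1 z_2\cdots z_p$ over $X^{\pm}$ of length $p\leq c$, where $c=\max\{|\theta_l^{\pm 1}(y)|\sst 1\leq l\leq s,\ y\in X^{\pm}\}$ depends only on $H$. Since $\psi_1\circ\cdots\circ\psi_{j-1}$ is a homomorphism,
\[
(\psi_1\circ\cdots\circ\psi_j)(y)=(\psi_1\circ\cdots\circ\psi_{j-1})(z_1)\cdots(\psi_1\circ\cdots\circ\psi_{j-1})(z_p),
\]
so I would let $B_{j,y}$ produce the concatenation $B_{j-1,z_1}\cdots B_{j-1,z_p}$, expanded into a balanced binary tree using $O(c)$ auxiliary non-terminals to respect the binary form of SLP productions. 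Indexing non-terminals by \emph{all} of $X^{\pm}$ handles inverses uniformly: if $y=x^{-1}$ then $\psi_j(y)=\psi_j(x)^{-1}$ is again a word over $X^{\pm}$ whose letters are read off directly. Each level contributes $O(|X^{\pm}|\cdot c)=O(1)$ non-terminals, so the program has size $O(k)$ and $B_{k,x}$ produces exactly $\Psi(x)$.

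For the final comparison I would add, for each $x\in X$, a non-terminal producing $x^{-1}$ and one producing $B_{k,x}\,x^{-1}$, obtaining an SLP whose value is trivial in $G$ exactly when $\Psi(x)=x$. Then $w$ represents the identity of $H$ if and only if all $|X|$ of these compressed-word-problem instances answer ``trivial''; since $|X|$ is a constant, this conjunctive truth-table reduction is a legitimate reduction. To see it runs in logarithmic space, note that the productions are entirely local: the production defining $B_{j,y}$ depends only on the single input symbol $\psi_j$ together with the fixed constant-size tables of the words $\theta_l^{\pm 1}(y)$, and it refers only to level-$(j-1)$ non-terminals. Hence the whole program can be emitted in a single sweep $j=1,\ldots,k$ onto the write-only output tape, while the work tape stores only the counter $j$ and a pointer into the current image word $z_1\cdots z_p$, all fitting in $O(\log k)$ bits.

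The main obstacle is the logarithmic-space bound rather than the construction itself. The program has size $O(k)$ and must never be held in work memory; it has to be streamed, which forces the discipline that every production be computable from a constant window of the input plus $O(\log k)$ counters. This is exactly why indexing the non-terminals by the pair $(j,y)$ and reading the fixed word $\psi_j(y)$ letter-by-letter is essential. A second point needing care is the bookkeeping for inverses: one must check that $\psi_j(x^{-1})$ is read as $\psi_j(x)^{-1}$ so that the recursion references the correctly-signed non-terminals $B_{j-1,z_m}$. Once the non-terminals range over all of $X^{\pm}$ this is automatic, but it is the step where a sign error would silently break correctness.
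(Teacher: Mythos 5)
Your construction is exactly the paper's reduction: level-indexed non-terminals $B_{j,y}$ (the paper's $A_{j,p}$ and $\overline{A_{j,p}}$, with your indexing over $X^{\pm}$ playing the role of the barred inverse symbols) that mimic the composition of the generating automorphisms one level at a time, followed by $|X|$ compressed-word-problem queries of the form ``is $\Psi(x)x^{-1}$ trivial.'' The proposal is correct, and your added care about binary productions and the streaming log-space bound only fills in details the paper leaves implicit.
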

To construct the reduction, one needs the generators of $H$ to be described by their action on generators of $G$.
That is, if $G=\langle g_1,\ldots,g_n\rangle$ then each $\phi_i\in H$ must be given by
\begin{equation} \label{Eqn:AutsGivenByWords}
\phi_i(g_j)=w_{ij}(g_1,\ldots,g_n),
\end{equation}
where $w_{ij}(g_1,\ldots,g_n)$ is a word
over the alphabet $\{g_1\ldots,g_n\}^{\pm 1}$. Now suppose $H=\langle \phi_1,\ldots,\phi_k\rangle$
and we want to decide if a word $\phi_{i_1}\ldots\phi_{i_m}$ represents the trivial element of $H$. Build a set of
non-terminals $\{A_{j,p},\overline{A_{j,p}}\}$, where $j\in\{1,\ldots,n\}$ and $p\in\{1,\ldots,m\}$, with productions
\begin{eqnarray*}
A_{j,0} & \rightarrow & g_j,\\
\overline{A_{j,0}} & \rightarrow & g_j^{-1},\\
A_{j,p} & \rightarrow & w_{i_p j}(A_{1,p-1},\ldots,A_{n,p-1}),\; p\geq 1, \\
\overline{A_{j,p}} & \rightarrow & (w_{i_p j}(A_{1,p-1},\ldots,A_{n,p-1}))^{-1},\; p\geq 1,
\end{eqnarray*}
where $w_{i_p j}(A_{1,p-1},\ldots,A_{n,p-1})$ is the word $w_{i_p j}$ with every instance of $g_i$
replaced by $A_{i,p-1}$ and of $g_i^{-1}$ by $\overline{A_{i,p-1}}$. One sees that
$w_{A_{j,m}}=\phi_{i_1}\ldots\phi_{i_m} (g_j)$. Then the word problem in $H$ reduces to checking that
$w_{A_{j,m}}=g_j$ for all $j$, i.e. it reduces to $n$ instances of the compressed word problem in $G$.

To prove Theorem~\ref{Thm:WPinAut} then, it suffices to show that $\Aut(G)$ is finitely generated and that every generator
can be described as in (\ref{Eqn:AutsGivenByWords}).

First, consider the case when $G$ is freely indecomposable.  The structure of the automorphism group of such $G$ has been
described in \cite{BumaginIso} using an Abelian JSJ-decomposition of $G$.  It follows from the results in \S 5 of that paper
that $\Aut(G)$ is finitely generated and the automorphisms can be described as in (\ref{Eqn:AutsGivenByWords}).
Note that constructing an Abelian JSJ-decomposition of an $\F$-group is effective (Theorem~13.1 of \cite{EffectiveJSJ}).

For the general case, let $G$ be any \F-group. Then $G$ has a \emph{Grushko decomposition} as
a free product
\[
G = G_1 * \cdots * G_k * F_r,
\]
where the $G_i$ are freely indecomposable non-cyclic groups and $F_r$ is a free group of rank $r$.
This decomposition is unique in the sense that
any other such decomposition has the same $k$ and $r$ and its freely indecompasable non-cyclic factors are
conjugated in $G$ to the factors $G_1,\ldots,G_k$.  One can effectively find a Grushko decomposition for
\F-groups \cite{EffectiveJSJ}.

The automorphism group of a free product has been described by
Fouxe-Rabinovitch and Gilbert \cite{Gilbert85} in terms of the automorphisms of its factors.
$\Aut(G)$ is generated
by the following automorphisms.
\begin{romanenumerate}
\item \emph{Permutation automorphisms.}  For each pair of isomorphic factors $G_i\simeq G_j$, fix an automorhism $\phi_{ij}$.
Choose $\phi_{ij}$ such that the collection is compatible, that is if $G_i\simeq G_j$ and $G_j\simeq G_k$ then
$\phi_{ik}=\phi_{jk}\phi_{ij}$.
\item \emph{Factor automorphisms.} Each automorphism of $G_i$ and of $F_r$ induces an automorphism of $G$ by acting
as the identity on all other factors. Any product of
such automorphisms is called a factor automorphism.
\item \emph{Whitehead automorhpisms.} Let $S$ be a basis of $F_r$.  An automorhpism of $G$ is a Whitehead automorphism if
there is an $x$ in some $G_i$ or in $S$ such that each factor $G_j$ is conjugated by $x$ of fixed pointwise,
and each $s\in S$ is sent to one of $s,sx,x^{-1}s,x^{-1}sx$.
\end{romanenumerate}
It follows from Theorem~4.13 of \cite{BumaginIso} that we can construct a compatible set of permutation automorphisms.
Since each $G_i$ is freely indecomposable we can construct a finite generating set for $\Aut(G_i)$. The automorphism group
of a free group $F(x_1,\ldots,x_r)$ is well-known to be finitely generated by the Nielsen automorphisms,
\begin{eqnarray*}
\alpha_i (x_k) & = & \braced{x_k^{-1}}{k=i}{x_k}{k\neq i}, \;i\in\{1,\ldots,r\} \\
\beta_{ij} (x_k) & = & \braced{x_k x_j}{k=i}{x_k}{k\neq i}, \;i,j\in\{1,\ldots,r\},i\neq j.
\end{eqnarray*}
Consequently, the factor automorphisms are finitely generated.
Since each $G_i$ (and $F_r$) is finitely generated, the set of Whitehead automorphisms is finitely generated.  Therefore we have proven
the following lemma, which completes the proof of Theorem~\ref{Thm:WPinAut}:
\begin{lemma}
Let $G$ be an \F-group.  Then $\Aut(G)$ is finitely generated and one can construct a generating set
in the form (\ref{Eqn:AutsGivenByWords}).
\end{lemma}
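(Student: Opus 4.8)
The plan is to establish the two assertions---finite generation of $\Aut(G)$ and the existence of a generating set whose members are explicit word maps as in (\ref{Eqn:AutsGivenByWords})---by first settling the freely indecomposable case and then assembling a generating set from the free-product structure given by the Grushko decomposition. Throughout, the key observation is that each automorphism we produce is specified by its action on the fixed generators of $G$, and that action is in every case a concrete word; so once we exhibit a finite generating set at all, it is automatically of the required form.

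First I would dispose of the freely indecomposable case, where essentially the entire burden is imported. The structure theory of $\Aut(G)$ for freely indecomposable $\F$-groups in terms of the Abelian JSJ-decomposition developed in \cite{BumaginIso} yields a finite generating set built from the automorphisms of the vertex groups, Dehn twists along the abelian edge groups, and inner automorphisms, each of which acts on the generators of $G$ by an explicit word and hence is of the form (\ref{Eqn:AutsGivenByWords}). The one point requiring attention is effectivity: that the JSJ-decomposition itself can be produced algorithmically, which is exactly Theorem~13.1 of \cite{EffectiveJSJ}, so that the generators can be written down concretely rather than merely shown to exist.

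For a general $\F$-group I would pass to its Grushko decomposition $G = G_1 * \cdots * G_k * F_r$, again effectively computable by \cite{EffectiveJSJ}, and invoke the Fouxe-Rabinovitch/Gilbert description \cite{Gilbert85} of the automorphism group of a free product, which reduces the task to producing the three listed families. The \emph{factor automorphisms} come from finite generating sets of each $\Aut(G_i)$ (available from the freely indecomposable step) together with the Nielsen generators $\alpha_i,\beta_{ij}$ of $\Aut(F_r)$; the \emph{permutation automorphisms} need a compatible choice of isomorphisms between isomorphic factors, supplied by Theorem~4.13 of \cite{BumaginIso}; and the \emph{Whitehead automorphisms} are finite in number because the basis $S$ of $F_r$ and each $G_i$ are finitely generated, so there are finitely many choices of the element $x$ and of the target assignments $s \mapsto s, sx, x^{-1}s, x^{-1}sx$. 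In each case the action on the chosen generators is manifestly a word, giving the form (\ref{Eqn:AutsGivenByWords}).

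The main obstacle is concentrated entirely in the freely indecomposable step: it is not at all obvious a priori that $\Aut(G)$ is finitely generated, and both the structural description and its algorithmic realization are substantial results that I would cite rather than reprove. Once they are in hand, the free-product bookkeeping is routine combinatorics and the verification that each generator is a word map is immediate from the definitions of the three families; the only genuine care needed is to confirm that chaining the effective Grushko and JSJ computations leaves the entire construction algorithmic, which is what the ``one can construct'' clause of the statement demands and what in turn feeds the reduction underlying Theorem~\ref{Thm:WPinAut}.
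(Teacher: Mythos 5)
Your proposal follows the paper's proof essentially step for step: the freely indecomposable case via the \S 5 structure results of \cite{BumaginIso} together with the effective JSJ-decomposition (Theorem~13.1 of \cite{EffectiveJSJ}), then the general case via the effective Grushko decomposition and the Fouxe-Rabinovitch/Gilbert generators, with permutation automorphisms made compatible by Theorem~4.13 of \cite{BumaginIso}, factor automorphisms from the $\Aut(G_i)$ and the Nielsen generators, and the observation that every generator acts by explicit words as in (\ref{Eqn:AutsGivenByWords}). One minor correction: the set of Whitehead automorphisms is not ``finite in number'' as you claim, since the element $x$ ranges over all of $G_i$ rather than just a generating set; the correct statement, as in the paper, is that this set is \emph{finitely generated} because each $G_i$ and $F_r$ is finitely generated.
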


\bibliographystyle{alpha}
\bibliography{compressed_words}

\begin{thebibliography}{BKM07}

\bibitem[BKM07]{BumaginIso}
I.~Bumagin, O.~Kharlampovich, and A.~Miasnikov.
\newblock The isomorphism problem for finitely generated fully residually free
  groups.
\newblock {\em J. Pure Appl. Algebra}, 208(3):961--977, 2007.

\bibitem[Gil87]{Gilbert85}
N.~D. Gilbert.
\newblock Presentations of the automorphism group of a free product.
\newblock {\em Proc. London Math. Soc. (3)}, 54(1):115--140, 1987.

\bibitem[KM98]{IrredAffineII}
O.~Kharlampovich and A.~Myasnikov.
\newblock Irreducible affine varieties over a free group. {II}. {S}ystems in
  triangular quasi-quadratic form and description of residually free groups.
\newblock {\em J. Algebra}, 200(2):517--570, 1998.

\bibitem[KM05]{EffectiveJSJ}
O.~Kharlampovich and A.~Myasnikov.
\newblock Effective {JSJ} decompositions.
\newblock {\em Comtemporary Mathematics}, 378, 2005.

\bibitem[Loh04]{Lohrey04}
M.~Lohrey.
\newblock Word problems on compressed words.
\newblock In {\em Automata, languages and programming}, volume 3142 of {\em
  Lecture Notes in Comput. Sci.}, pages 906--918. Springer, Berlin, 2004.

\bibitem[LS07]{LohreySchleimer07}
M.~Lohrey and S.~Schleimer.
\newblock Efficient computation in groups via compression.
\newblock In {\em Second International Symposium on Computer Science in Russia,
  CSR 2007, Ekaterinburg, Russia, September 3-7, 2007. Proceedings}, volume
  4649 of {\em Lecture Notes in Computer Science}, pages 249--258. Springer
  Berlin / Heidelberg, 2007.

\bibitem[Lyn60]{Lyndon60}
R.~Lyndon.
\newblock Groups with parametric exponents.
\newblock {\em Trans. Amer. Math. Soc.}, 96:518--533, 1960.

\bibitem[MRS05]{RegularFreeLengthFunctions}
A.~Myasnikov, V.~Remeslennikov, and D.~Serbin.
\newblock Regular free length functions on {L}yndon's free {$\Bbb Z[t]$}-group
  {$F\sp {\Bbb Z[t]}$}.
\newblock In {\em Groups, languages, algorithms}, volume 378 of {\em Contemp.
  Math.}, pages 37--77. Amer. Math. Soc., Providence, RI, 2005.

\bibitem[Pla94]{Plandowski94}
W.~Plandowski.
\newblock Testing equivalence of morphisms on context-free languages.
\newblock In {\em Algorithms---{ESA} '94 ({U}trecht)}, volume 855 of {\em
  Lecture Notes in Comput. Sci.}, pages 460--470. Springer, Berlin, 1994.

\bibitem[Sch08]{Schleimer08}
S.~Schleimer.
\newblock Polynomial-time word problems.
\newblock {\em Comment. Math. Helv.}, 83(4):741–--765, 2008.

\end{thebibliography}
\end{document}